\documentclass[12pt]{amsart} 

\usepackage[margin=2.9cm]{geometry}             
\usepackage{color,ulem}
\usepackage{graphicx}
\usepackage{amssymb, enumerate,bm}
\usepackage[matrix,arrow,ps]{xy}
\usepackage{epstopdf, amsmath} 
\usepackage{paralist}
\newcommand{\C}{\mathbb C}

\newcommand{\R}{\mathbb R}

\newcommand{\transp}{\,^t}

\newcommand{\aaa}{\mathfrak{(a)}}

\newcommand{\bbb}{\mathfrak{(b)}}

\newtheorem{theo}{Theorem}[section]
\newtheorem{lemma}[theo]{Lemma}
\newtheorem{cor}[theo]{Corollary}
\newtheorem{prop}[theo]{Proposition}

\theoremstyle{remark}
\newtheorem{remark}[theo]{Remark}
\theoremstyle{example}
\newtheorem{example}[theo]{Example}
\theoremstyle{definition}
\newtheorem{defi}[theo]{Definition}
\numberwithin{equation}{section}

\begin{document}

\begin{abstract}
We investigate some of the properties of the Levi map and related objects for nondegenerate CR submanifolds in $\C^N$ of codimension $d$.    
\end{abstract} 
\thanks{Research of the first author 
author was support by a URB  grant from the American University of Beirut,  and by the Center for Advanced Mathematical Sciences.}

\author[Bertrand, Meylan]{Florian Bertrand and Francine Meylan}
\title[On the Levi map of nondegenerate CR submanifolds]{On the Levi map of nondegenerate CR submanifolds}

\subjclass[2010]{}

\keywords{}
\thanks{}

\maketitle


\section*{Introduction}

Let $M \subset \C^N$ be a CR submanifold of codimension $d$. In order to understand the geometry of M and properties of its maps, it is important to associate invariant objects. When $M$ is nondegenerate in a suitable sense, one of these natural objects is the Levi map. Geometric properties of the Levi map have strong consequences on holomorphic extension and regularity properties of CR mappings for $M$. For instance, when the convex hull of the range of the Levi map, that is, the Levi cone, has a nonempty interior, then there is wedge extendability for CR functions of $M$ \cite{bo-po}
(see also p. 200-202 \cite{bo}).  Inspired by the work of Boggess \cite{bo}, we aim  to study and characterize the geometry of the Levi map. In particular, one of our motivations is to understand  the link between the Levi map and the different notions of nondegeneracy that have led to the 
2-jet determination of CR automorphisms of $M$ (see for instance \cite{be1, be-bl-me,be-me,ch-mo,la-mi,tu,tu3,za}). Indeed, in addition to the Levi 
cone, the range of the Levi map, referred to as its Levi range, seems to play a natural role. For instance, we show in Theorem \ref{theorange} 
and its Corollary \ref{cordisc} that the Levi range of $M$ has nonempty interior if and only if  there exists a {\it nondefective} analytic disc of a 
special form. The notion of {\it defect},  which is crucial in the wedge extendability of CR functions,  was introduced by Tumanov \cite{tu0}, and 
has recently appeared to be important in the study of jet determination problems \cite{be-me, tu3}. Furthermore, we show in Proposition \ref{propd=2} 
and Theorem \ref{theod=3} that in codimension $2$ and $3$, the Levi range of $M$ always has  nonempty interior, provided that the Levi cone 
has   nonempty interior. We also study manifolds for which the Levi cone and the Levi range coincide. In particular, we prove in Theorem
\ref{theorangecone} that they always coincide for weakly pseudoconvex manifolds of codimension 2 and for any submanifold of codimension $2$ in $\C^4$.

\section{Preliminaries}
\subsection{Nondegenerate generic submanifolds}

We consider  a smooth generic  real submanifold  $M \subset \C^{N}$ of real codimension $d\ge 1$ of the form 
\begin{equation}\label{eqred}
\begin{cases}
\Re e  w_1= \transp\bar z A_1 z+ \ldots\\
\ \ \ \ \vdots \\
\Re e  w_d = \transp\bar z A_d z+ \ldots
\end{cases}
\end{equation}
where $A_1,\hdots,A_d$ are $n\times n$ Hermitian matrices and where the dots represent terms in $z$ and $\Im m w$ of higher (weighted) order (see \cite{bo,bl-me}). In case there are no dots in \eqref{eqred}, we refer to $M$ as a model quadric and we say that $M$ is defined by the matrices $A_1,\ldots,A_d$.  We now recall a series of different notions of nondegeneracy which have appeared in relation with jet determination of CR diffeomorphisms.

\begin{defi} [\cite{be1}]\label{defnondegbe}
The submanifold $M$ given by \eqref{eqred} is  {\it Levi nondegenerate} at 0 if the following two conditions are both satisfied
\begin{center}
\begin{tabular}{cl}
$\aaa$ & $A_1$,...,$A_d$ are linearly independent (equivalently on $\R$ or $\C$),\\
\\
$\bbb$ & $\bigcap_{j=1}^d\mathrm{Ker}A_j=\{0\}$.
\end{tabular}
\end{center}
\end{defi}

Note that Condition $\aaa$ implies  $d \le n^2$  and  that $M$ is  of finite type, while Condition $\bbb$ implies that $M$ is holomorphically nondegenerate. Moreover,  when $M$ is a hypersurface, Condition $\bbb$ implies Condition  $\aaa$. 

\begin{defi} [\cite{tu}] \label{defnondegtu}
The submanifold $M$ is  {\it strongly Levi nondegenerate} (resp. {\it strongly pseudoconvex}) at 0 if there exists a real linear combination of the $A_j's$ which is invertible (resp. positive definite).
\end{defi}

We point out that in Section \ref{sec3} we will work with the weak version of pseudoconvexity (see Definition  \ref{defweak}).
Note that  if $M$ is  strongly Levi nondegenerate then it satisfies Condition   $\bbb.$ 

\begin{remark}
Any submanifold $M$ of maximal codimension $d=n^2$   satisfying Condition $\aaa$  is strongly pseudoconvex.  
Indeed, after a  holomorphic linear change of coordinates, $M$ can be written as	
\begin{equation*}
\Re e  w_{j,k}=\transp \overline{z} A_{j,k} z \ \ 1\leq j,k \leq n
\end{equation*}
where $A_{j,k}, 1\leq j,k \leq n$ is the canonical basis of the space of Hermitian $n\times n$ matrices. In particular, $\Re e  w_{j,j}=|z_j|^2$, which implies  that $M$  is strongly  pseudoconvex.
\end{remark}

We  now recall  the following notion  which implies conditions $\aaa$ and $\bbb$, and the strong  Levi nondegeneracy.

\begin{defi} [\cite{be-bl-me}]\label{deffully} 
Assume that $M$ is  strongly Levi 
nondegenerate. We denote by $A:=\sum c_jA_j$ an invertible real linear combination of the $A_j$'s. 
We say that  $M$ is {\it fully nondegenerate} at 0 if  there exists $V \in \C^{n}$ 
such that both $$\transp \overline D A^{-1}D \ \ \ \mbox{ and } \ \ \ \Re e \left(\transp \overline D A^{-1}D\right)$$ are invertible, where $D$ is the $n \times d$ matrix whose $j^{th}$ column is $A_jV$.  
\end{defi}
Note that this imposes the strong restriction $d\leq n$. When $M$ is fully nondegenerate then the matrix $D$ is of (complex) rank $d$, that is, the vectors $A_1V,\ldots,A_dV$ are $\C$-linearly independent. It is in fact equivalent when $M$ is strongly pseudoconvex since in that case $A$ can be assumed to be the identity matrix. 
Also, in the hypersurface case, the fully nondegeneracy condition is  equivalent to Condition $\bbb.$ A more flexible notion, which  allows to work with $d \le 2n$  instead, is the following.     
\begin{defi} [\cite{be-me}]\label{defect} 
The submanifold $M$ given by  (\ref{eqred}) is {\it$\mathfrak{D}$-nondegenerate} at 0 if $M$ is strongly Levi 
nondegenerate and  if there exists $V \in \C^n$  such that  $\Re e (\transp \overline D A^{-1}D)$ is invertible. Here, $A$ and $D$ still denotes the matrices introduced in Definition \ref{deffully}. 
\end{defi}

It  follows that when $M$ is {\it$\mathfrak{D}$-nondegenerate},   there exists $V \in \C^n$  such that the vectors $A_1V,\ldots,A_dV$ are $\R$-linearly independent in $\C^n$, leading to the restriction $d\leq 2n$. We emphasize that under the assumption that $M$ is {\it$\mathfrak{D}$-nondegenerate}, sufficiently smooth $CR$ germs automorphisms at $0$ of $M$ are uniquely determined by their $2-$jet \cite{be-me}.

\subsection{The Levi map and associated objects}

Let $M \subset \C^{N}$ be a real submanifold. We respectively denote by $TM$, $NM$ and $N^*M$, its tangent, normal and conormal bundles. We also denote by $\pi: T{\C^N} \to NM$ the orthogonal projection onto $NM$. Following \cite{ber, bo} (see also \cite{bl-me}), we define 
\begin{defi}
The {\it Levi map} at $p \in M$, $L_p: T_p^{(1,0)}M \to N_pM$, where $T_p^{(1,0)}M$ denotes the space of holomorphic vector fields, is defined by 
$$L_p(V_p):=\frac{1}{2i}\pi_p\left(J_{st}\left[\overline{V},V\right]_p\right)$$
where $V$ is any holomorphic vector field with $V(p)=V_p \in T_p^{(1,0)}M$ and where $J_{st}$ denotes the standard complex structure on $\C^N$.  
The image $L_p\left(T_p^{(1,0)}M\right)$ of the Levi map is referred to as its {\it Levi range at $p$}. The convex  hull of the Levi range at $p$ is called the {\it Levi cone at $p$} and is denoted by $\Gamma_p\subset N_pM$. Finally, the {\it dual Levi cone} $\Gamma^*_p\subset N^*_pM$ is defined by 
$$\Gamma^*_p:=\{ c \in\R^d \ | \ -c\partial \overline{\partial}\rho\left(V_p,\overline{V_p}\right)>0 \  \ \forall \ V_p \ \in T_p^{(1,0)}M\}.$$
\end{defi}
In coordinates in which $M$ is given by \eqref{eqred}, the Levi map at $p=0$ is given by $$L_0: \C^n \to \R^d$$ with
$$L_0(V)=\left(\transp \overline{V}A_1V,\transp \overline{V}A_2V,\ldots,\transp \overline{V}A_dV\right).$$
In such case, the Levi cone is the convex hull 
$$\Gamma_0={\rm conv} \left\{\left(\transp \overline{V}A_1V,\transp \overline{V}A_2V,\ldots,\transp \overline{V}A_dV\right) \ | \ V\in \C^n \right\},$$
while the  dual Levi cone is 
$$\Gamma^*_p=\left\{ c \in\R^d \ | \ \sum_{j=1}^dc_jA_j>0 \right\}.$$  
\begin{remark}
Note that for any $\lambda \in \C$ and $V \in \C^n$ we have $L_0(\lambda V)=|\lambda|^2L_0(V)$. It follows that if a point $Y \in \R^d$ lies in the Levi range at $0$ of $M$, then the semi-line through $Y$, namely $\{ \lambda Y \ | \ \lambda \geq 0\}$,  is contained in the Levi range.
\end{remark}     
We recall the following result due to Tumanov \cite{tu} (see also \cite{bl-me}).

\begin{prop}\label{propcone1}
Let $M$ be a real submanifold given by \eqref{eqred}. 
\begin{enumerate}[i.]
\item The submanifold $M$ is strongly pseudoconvex at $0$ if and only if $\Gamma^*_0\neq \emptyset$.
\item Condition $\aaa$ holds if and only if the interior of the Levi cone $\Gamma_0$ is  nonempty if and only if the Levi range $L_0(\C^n)$ is not included in a hyperplane in $\R^d$. 
\end{enumerate}
\end{prop}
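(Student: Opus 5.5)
Both parts reduce to linear algebra in the coordinates of \eqref{eqred}, where $L_0(V)=(\transp\overline V A_1V,\ldots,\transp\overline V A_dV)$ and $\Gamma^*_0=\{c\in\R^d \mid \sum_j c_jA_j>0\}$. The tool throughout is the pairing identity
$$\langle c, L_0(V)\rangle=\sum_{j=1}^d c_j\,\transp\overline V A_jV=\transp\overline V\Big(\sum_{j=1}^d c_jA_j\Big)V .$$
It turns statements about where $L_0$ lands into statements about Hermitian forms.

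For part i, assume first that $M$ is strongly pseudoconvex at $0$. Then some real combination $\sum c_jA_j$ is positive definite. Up to the sign convention in the definition of $\Gamma^*_0$ (the $-\partial\overline\partial\rho$ versus $\Re e\, w=\ldots$ normalization, which only replaces $c$ by $-c$), this $c$ lies in $\Gamma^*_0$, so $\Gamma^*_0\neq\emptyset$. Conversely, if $c\in\Gamma^*_0$, then $\sum c_jA_j>0$, and this combination is positive definite, which is exactly strong pseudoconvexity. So part i is essentially a translation of definitions. The only thing to check is that the coordinate formula for $\Gamma^*_0$ agrees with the invariant definition, which the paper has already recorded.

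For part ii, I would prove a cycle of equivalences. First, (range in a hyperplane) $\Leftrightarrow$ (not $\aaa$). The range $L_0(\C^n)$ lies in a hyperplane through the origin iff some $c\neq0$ gives $\transp\overline V(\sum c_jA_j)V=0$ for all $V$. By polarization of Hermitian forms this holds iff $\sum c_jA_j=0$, i.e. iff the $A_j$ are $\R$-linearly dependent. $\R$- and $\C$-dependence agree for Hermitian matrices: take real or imaginary parts of a complex relation.

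A hyperplane not through the origin is excluded automatically. Indeed $0=L_0(0)$ lies in the range, and by the homogeneity Remark the range is a cone, so any affine hyperplane containing it must pass through $0$.

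Second, (interior of $\Gamma_0$ nonempty) $\Leftrightarrow$ (range not in a hyperplane). A convex set in $\R^d$ has nonempty interior iff its affine hull is all of $\R^d$. The affine hull of $\Gamma_0={\rm conv}\,L_0(\C^n)$ equals the affine hull of $L_0(\C^n)$. Since $0$ belongs to the range, that affine hull is the linear span of the range, and the span is proper exactly when the range lies in a hyperplane through $0$.

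The only mildly delicate points are the polarization step (a Hermitian form vanishing on the diagonal is zero) and being careful that $\Gamma_0$ is taken as a convex hull in $\R^d$ with $0$ included. I expect no real obstacle: the main care is in the sign conventions of part i.
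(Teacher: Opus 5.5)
Your proof is correct. The paper gives no proof of this proposition: it is recalled from Tumanov's work, so there is no argument in the paper to compare against. What you wrote is the standard linear-algebra argument.

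Part i is indeed a direct translation of the definitions via the coordinate description $\Gamma^*_0=\{c \mid \sum c_jA_j>0\}$ (with $V\neq 0$ understood). The sign convention does not affect nonemptiness.

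In part ii, the pairing identity plus polarization gives the equivalence with $\aaa$. Polarization holds for any sesquilinear form, so a Hermitian $H$ with $\transp\overline{V}HV\equiv 0$ is zero. The affine-hull argument then gives the equivalence with nonempty interior. You do not need the homogeneity remark there: $0=L_0(0)$ lies in the range, so any hyperplane containing the range already passes through the origin.

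The one step worth tightening is the passage from $\C$- to $\R$-linear dependence. Taking entrywise real or imaginary parts of $\sum\lambda_jA_j=0$ does not give a relation among the $A_j$, because the $A_j$ have complex entries. Instead, apply $X\mapsto\transp\overline{X}$ to get $\sum\overline{\lambda_j}A_j=0$. Adding and subtracting then gives $\sum\Re e\lambda_j\,A_j=0$ and $\sum\Im m\lambda_j\,A_j=0$, and at least one of these relations is nontrivial.
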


As pointed out by Tumanov \cite{tu,tu2},  if $M \subset \C^N$ is strongly pseudoconvex at $0$ then its Levi cone $\Gamma^*_0$ does not contain a line through the origin in $\R^d$. Indeed, if  $M$ is strongly pseudoconvex at $0$, then one can assume that, after a linear change of coordinate, $A_1=I_n$. The Levi range and thus the Levi 
cone is included in $(\R^+\times \R^{d-1}) \cup \{0\} \subset \R^d$ and so does not contain a line through $0$. 

The converse of this statement is false in general. There are indeed trivial examples of submanifolds which are not strongly pseudoconvex and whose Levi cone does not contain a line through 0; e.g. the submanifold defined by $\Re e w_1=|z_1|^2, \Re e w_2=|z_2|^2$ in $\C^5$ which is  not strongly Levi nondegenerate, or  the one defined by $\Re e w_1=|z_1|^2, 
\Re e w_2=|z_1|^2$ in $\C^4$ which does not satisfy Condition $\aaa$. A more interesting example is the strongly Levi 
nondegenerate submanifold  in Example \ref{exline} which satisfies Condition $\aaa$ and is not strongly pseudoconvex, but whose Levi cone 
does not contain a line through $0$. Note however that its Levi cone contains entire lines which do not pass through the origin. 
The following question then remains open.

\vspace{0.1cm}

\noindent {\bf Question:}  Assume that $M$ (of the form \eqref{eqred}) is a strongly Levi nondegenerate submanifold satisfying Condition $\aaa$ with a Levi cone at $0$ that {\it does not contain an entire line} (not passing through the origin). Is $M$ strongly pseudoconvex?
%
%
%
%
%

%
\subsection{Examples}

We now provide some examples of Levi ranges and cones. The first four examples deal with codimension $d=2$.		
	
\begin{example}\label{excorner}
We consider the strongly pseudoconvex quadric $M\subset \C^4$ of codimension $2$ defined by   
$$A_1=
\begin{pmatrix}
	1& 0 \\ 0 &0 
\end{pmatrix} \ \ \ \ A_2=
\begin{pmatrix}
	0& 0 \\ 0 &1 
\end{pmatrix}.$$
The Levi range of $M$ is then
 $$\left\{\left(|V_1|^2,|V_2|^2\right) \ | \ V\in \C^2 \right\}=\{(x,y) \in \R^2 |  \ x\geq 0, \ y\geq 0\}\subset \R^2,$$
 and thus coincide with its Levi cone.
\end{example}

\begin{example}\label{ex2pi}
 The Levi range of the strongly Levi nondegenerate quadric $M\subset \C^4$ of codimension $2$ defined by   
$$A_1=
\begin{pmatrix}
	1& 0 \\ 0 &-1 
\end{pmatrix} \ \ \ \ A_2=
\begin{pmatrix}
	0& 1 \\ 1 &0 
\end{pmatrix}$$
is equal to
$$\left\{\left(|V_1|^2-|V_2|^2,V_1\overline{V_2}+\overline{V_1}V_2\right) \ | \ V\in \C^2 \right\}=\R^2.$$
It follows that the Levi range coincides once more the  Levi cone.
\end{example}

\begin{example}\label{exconet} Let $M_t\subset \C^4$ be the strongly pseudoconvex quadric of codimension $2$ defined by   
$$A_1=
\begin{pmatrix}
	1& 0 \\ 0 &1 
\end{pmatrix} \ \ \ \ A_2=
\begin{pmatrix}
	1& 0 \\ 0 & t
\end{pmatrix}$$
with $0\leq t<1$.  Its Levi range, which also coincides with its Levi cone, is equal to the cone 
$$\left\{\left(|V_1|^2+|V_2|^2,|V_1|^2+t|V_2|^2\right) \ | \ V\in \C^2 \right\}=\{(x,y) \in \R^2 \ | \  0 \leq t x \leq  y \leq x\}.$$
\end{example}

\begin{example}
We  now consider the $\mathfrak{D}$-nondegenerate quadric $M\subset \C^5$ of codimension $2$ defined by   
$$A_1=
\begin{pmatrix}
	1& 0 & 0 \\ 0 &-1 & 0 \\ 0 &0 & 0
\end{pmatrix} \ \ \ \ A_2=
\begin{pmatrix}
	0& 0 & 0 \\ 0 &0 & 0 \\ 0 &0 & 1
\end{pmatrix}.$$
Its Levi range is 
 $$\left\{\left(|V_1|^2-|V_2|^2,|V_3|^2\right) \ | \ V\in \C^3 \right\}=\{(x,y) \in \R^2 |  \ y\geq 0\}\subset \R^2.$$
 and coincides once more with its Levi cone. Note that it contains the entire line (through the origin) $y=0$.  
\end{example}    

\begin{example}\label{exrank}
Let $M\subset \C^5$ be the strongly pseudoconvex quadric of codimension $3$ defined by   
$$A_1=
\begin{pmatrix}
	1& 0 \\ 0 &0 
\end{pmatrix} \ \ \ \ 
A_2=
\begin{pmatrix}
	0& 0 \\ 0 &1 
\end{pmatrix}\ \ \ \ 
A_3=
\begin{pmatrix}
	0& 1 \\ 1 &0 
\end{pmatrix}.$$
Note that $M$ is $\mathfrak{D}$-nondegenerate (e.g. one can choose $A=A_1+A_2$ and $V=\transp(i,1)$) but is not fully nondegenerate due to dimensional reasons.
Its Levi range
 $$\left\{\left(|V_1|^2,|V_2|^2,V_1\overline{V_2}+\overline{V_1}V_2\right) \ | \ V\in \C^2 \right\}$$ 
 is the convex region $\{(x,y,2a\sqrt{xy}) \ | \ x,y\geq 0 \ \  -1\leq a \leq 1\}$. 
\end{example}

\begin{example}\label{exline}
The quadric $M\subset \C^7$ of codimension $3$ defined by   
$$A_1=
\begin{pmatrix}
	1& 0 & 0 & 0 \\ 0 &0 & 0 & 0 \\ 0 &0 & 0 & 0 \\ 0 &0 & 0 & 0
\end{pmatrix} \ \ \ \ 
A_2=
\begin{pmatrix}
	0& 0 & 0 & 0 \\ 0 &1 & 0 & 0 \\ 0 &0 & 0 & 0 \\ 0 &0 & 0 & 0
\end{pmatrix}\ \ \ \ 
A_3=
\begin{pmatrix}
	0& 0 & 0 & 1 \\ 0 &0 & 1 & 0 \\ 0 &1 & 0 & 0 \\ 1 &0 & 0 & 0
\end{pmatrix}.$$
 satisfies Condition $\aaa$ and is strongly Levi nondegenerate. Its Levi range, also equal to its Levi cone, is 
  $$\left\{\left(|V_1|^2,|V_2|^2,2\Re e (V_1\overline{V_4}+V_2\overline{V_3})\right) \ | \ V\in \C^4 \right\} = \{ (x,y,t) \ | \ x, y\geq 0\} \setminus \left(\{0\}\times \{0\} \times \R^*\right).$$ 
It follows that its Levi cone
 does not contain any line through 0. Note however that it contains all the lines of the form $\{(a,b,t) \ | \ t \in \R\}$ with $a,b \geq 0$ not both zero.  Moreover  $M$ is not strongly pseudoconvex. Indeed, consider any real linear combination 
 $$A:=aA_1+bA_2+cA_3=\begin{pmatrix}
	a& 0 & 0 & c \\ 0 &b & c & 0 \\ 0 &c & 0 & 0 \\ c &0 & 0 & 0\\
\end{pmatrix}.$$
For $V=\transp (1,1,V_3,0)$ we have 
 $$\transp \overline{V} A V= a+b+2c\Re e V_3$$
 which can be  negative or positive e.g. by taking $\Re e V_3$ arbitrarily large (positively or negatively). Alternatively, one can see that $A$ has   
 eigenvalues of opposite signs, namely two positive ones $\frac{a+\sqrt{a^2+4c^2}}{2}$ and $\frac{b+\sqrt{b^2+4c^2}}{2}$,  and two negatives ones 
 $\frac{a-\sqrt{a^2+4c^2}}{2}$ and $\frac{b-\sqrt{b^2+4c^2}}{2}$.      
 
\end{example}

\section{The Levi range and the defect of analytic discs}\label{sec2}

In this section, we investigate the connection between the Levi range and the notion of $\mathfrak{D}$-nondegeneracy. More precisely, we are interested in characterizing the $\mathfrak{D}$-nondegeneracy by means of the Levi range in a spirit similar to Proposition \ref{propcone1}. We recall that if a real submanifold $M \subset \C^N$ of codimension $d$ of the form \eqref{eqred} is  $\mathfrak{D}$-nondegenerate at $0$, then in particular there exists $V \in \C^n$ such that $A_1V,\ldots,A_nV$ are $\R$-linearly independent. This property turns out to have a particular connection with the Levi range of $M$.    

\begin{theo}\label{theorange}
Let  $M$ be a  real submanifold of the form \eqref{eqred}. There exists $V \in \C^n$ such that $A_1V,\ldots,A_dV$ are $\R$-linearly independent if and only if the Levi range of $M$ at $0$ has nonempty interior. 
\end{theo}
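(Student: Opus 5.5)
The key observation is that the condition on the vectors $A_1V,\ldots,A_dV$ is exactly the condition that the real differential of the Levi map $L_0:\C^n\cong\R^{2n}\to\R^d$ at $V$ is surjective. We then compare this rank condition with the interior of the image. The differential of $L_0$ at $V$ in the direction $H\in\C^n$ is
$$dL_0(V)H=\left(2\Re e\,\transp\overline{H}A_1V,\ldots,2\Re e\,\transp\overline{H}A_dV\right),$$
since each $A_j$ is Hermitian. So $dL_0(V)$ fails to be onto exactly when some $c\in\R^d\setminus\{0\}$ satisfies
$$\sum_j c_j\,\Re e\,\langle A_jV,H\rangle=0 \quad \mbox{for all } H,$$
where $\langle\cdot,\cdot\rangle$ is the standard Hermitian product. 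This means $\Re e\,\langle \sum_j c_jA_jV,H\rangle=0$ for all $H$, i.e.\ $\sum_j c_jA_jV=0$. This is precisely $\R$-linear dependence of $A_1V,\ldots,A_dV$. Hence the condition in Theorem \ref{theorange} says: $L_0$ has rank $d$ at some point $V$.

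\emph{($\Rightarrow$)} Suppose $dL_0(V)$ is surjective. By the submersion (open mapping) theorem, $L_0$ maps a neighborhood of $V$ onto a neighborhood of $L_0(V)$. Therefore the Levi range $L_0(\C^n)$ contains an open set, so it has nonempty interior.

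\emph{($\Leftarrow$)} Suppose $L_0$ has rank $<d$ at every point of $\C^n$. Then every point is critical. Since $L_0$ is polynomial (hence $C^\infty$), Sard's theorem says the set of critical values has Lebesgue measure zero in $\R^d$. Here the critical values form the whole range $L_0(\C^n)$. A set of measure zero has empty interior, which gives the contrapositive.

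An alternative to Sard, avoiding the measure-theoretic input, runs as follows. Let $r<d$ be the maximal rank of $dL_0$. The set where the rank equals $r$ is open and dense, being the complement of an algebraic set. By the constant rank theorem, its image is a countable union of $r$-dimensional immersed pieces. The remaining image, of lower-rank points, is handled by induction on strata. Sard is cleaner, and I would use it.

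The only step requiring care is the identification of surjectivity of $dL_0(V)$ with $\R$-linear independence of the $A_jV$. This uses the Hermitian symmetry $\transp\overline{H}A_jV+\transp\overline{V}A_jH=2\Re e\,\transp\overline{H}A_jV$, together with the fact that the real parts $\Re e\,\langle W,H\rangle$ over all $H$ determine $W$. The two topological directions are then standard applications of the submersion theorem and Sard's theorem.
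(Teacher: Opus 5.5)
Your proposal is correct and is essentially the paper's proof. You identify $\R$-linear independence of $A_1V,\ldots,A_dV$ with surjectivity of the real differential $d_VL_0$; the paper reads this off the real matrix $(\Re e(\transp D)\ \ \Im m(\transp D))$, while you use the vanishing of $\Re e\langle \sum_j c_jA_jV,H\rangle$ for all $H$, which is the same computation. You then apply the submersion theorem for one direction and Sard's theorem for the converse, exactly as the paper does. Your Sard-free alternative (constant rank on the top-rank locus plus induction on strata) is only an outline, but since you rely on Sard nothing is missing.
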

\begin{proof}
Assume first that there exists $V \in \C^n$ such that the vectors $A_1V,\ldots,A_dV$ are $\R$-linearly independent, and consider the Levi map $L_0: \C^n \to \R^d$
$$L_0(W)=\left(\transp \overline{W}A_1W,\ldots,\transp \overline{W}A_dW\right).$$
Its differential at $V$, denoted by $d_VL_0: \C^n \to \R^d$, is given by 
\begin{eqnarray*}
d_VL_0(W)&=&\transp\left(\transp \overline{V}A_1W+\transp \overline{W}A_1V,\ldots,\transp \overline{V}A_dW+\transp \overline{W}A_dV\right)\\
\\
& =& 2\transp\left(\Re e (\transp \overline{W}A_1V),\ldots,\Re e (\transp \overline{W}A_dV)\right)\\
\\
& =& 2 \Re e \left(\transp D \overline{W}\right).\\
\end{eqnarray*}
Here we recall that $D$ is the $n \times d$ matrix whose $j^{th}$ column is $A_jV$.  
As a linear map from $\R^{2n}\simeq \C^n$ to $\R^d$, the map $d_VL_0$ is equal, after reordering its columns, to 
$$d_VL_0=(\Re e (\transp D) \ \ \Im m  (\transp D)).$$
By assumption, the rows of $\transp D$ are $\R$-linearly independent, implying that the real differential $d_VL_0: \R^{2n} \to \R^d$ is of rank $d$, and is thus surjective.
It follows that the range of the Levi map contains an open set centered at the point $L_0(V) \in \R^d$. 

Before proving the converse, note that we have proved that $A_1V,\ldots,A_dV$ are $\R$-linearly independent if and only if the real differential $d_VL_0$ has maximal real rank $d$.  Assume now that the Levi range of $M$ contains an open set $O \subset \R^d$. According to Sard's theorem, the set of values $L_0(V)\in \R^d$ such that the differential $d_VL_0:\R^{2n}\to \R^d$ has rank less than $d$ has zero measure. It follows that there is $V \in \C^n$ with $L_0(V) \in O$ such that the differential  $d_VL_0$ has maximal rank $d$, and thus, $A_1V,\ldots,A_dV$ are $\R$-linearly independent. 
\end{proof}

As a direct consequence, we have the following two important corollaries. 
\begin{cor}
Let  $M$ be a   real submanifold of the form \eqref{eqred}. If M is $\mathfrak{D}$-nondegenerate at $0$  then its Levi range at $0$ has nonempty interior. In case $M$ is strongly pseudoconvex these are equivalent statements.
\end{cor}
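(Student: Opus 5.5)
The first implication follows from Theorem \ref{theorange}. Suppose $M$ is $\mathfrak{D}$-nondegenerate at $0$. Then there is an invertible real combination $A=\sum c_jA_j$ and a vector $V\in\C^n$ such that $\Re e(\transp\overline D A^{-1}D)$ is invertible. I claim that $A_1V,\ldots,A_dV$ are then $\R$-linearly independent. Suppose $\sum_j x_jA_jV=0$ with $x\in\R^d$. This says $Dx=0$. Hence $\transp\overline D A^{-1}Dx=0$, and since $x$ is real, taking real parts gives
$$\Re e(\transp\overline D A^{-1}D)\,x=0.$$
Invertibility of this real matrix forces $x=0$. Theorem \ref{theorange} now shows that the Levi range at $0$ has nonempty interior.

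For the converse, assume $M$ is strongly pseudoconvex and that the Levi range has nonempty interior. By Theorem \ref{theorange} there is $V$ with $A_1V,\ldots,A_dV$ $\R$-linearly independent. Strong pseudoconvexity gives a positive definite combination $A=\sum c_jA_j$, which is invertible, so $M$ is strongly Levi nondegenerate. It remains to check that $\Re e(\transp\overline D A^{-1}D)$ is invertible for this same $V$. Take $x\in\R^d$ with $\Re e(\transp\overline D A^{-1}D)x=0$. Because $A^{-1}$ is Hermitian, the matrix $H=\transp\overline D A^{-1}D$ is Hermitian, so for real $x$,
$$\transp x\, \Re e(H)\, x=\transp x H x=\transp{\overline{(Dx)}}\,A^{-1}(Dx).$$
This vanishes, and since $A^{-1}$ is positive definite, $Dx=0$. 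That is, $\sum x_jA_jV=0$, so $x=0$ by the $\R$-linear independence. Therefore $\Re e(H)$ is invertible and $M$ is $\mathfrak{D}$-nondegenerate.

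The only delicate point is this last step. It uses the identity $\transp x\,\Re e(H)\,x=\transp x H x$ for real $x$, which holds because $H$ is Hermitian. One must also fix $A$ positive definite, not merely invertible, so that $\transp{\overline{(Dx)}}A^{-1}(Dx)=0$ forces $Dx=0$. The definition of $\mathfrak{D}$-nondegeneracy allows any invertible combination $A$, so choosing the positive definite one supplied by strong pseudoconvexity is legitimate.
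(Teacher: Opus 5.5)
Your proof is correct and follows the paper's route. The paper calls this a direct consequence of Theorem \ref{theorange}: it relies on the remark after Definition \ref{defect} that $\mathfrak{D}$-nondegeneracy forces $A_1V,\ldots,A_dV$ to be $\R$-linearly independent, and, for the converse, on the remark that in the strongly pseudoconvex case one may take $A=I_n$, so that $\Re e(\transp\overline{D}D)$ is the real Gram matrix of the $A_jV$; your use of the positive definite form $\transp\overline{u}A^{-1}w$ is simply the coordinate-free version of that normalization.
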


For dimensional reasons, we have
\begin{cor}
If $M$ is strongly Levi nondegenerate with $2n<d\leq n^2$ then  the interior of its Levi range at $0$ is empty.
\end{cor}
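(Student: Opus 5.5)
The statement follows from Theorem \ref{theorange} by a dimension count. By that theorem, the Levi range at $0$ has nonempty interior if and only if there is some $V \in \C^n$ for which the vectors $A_1V,\ldots,A_dV$ are $\R$-linearly independent in $\C^n$. So it is enough to show that no such $V$ can exist when $d>2n$.

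For this, observe that $A_1V,\ldots,A_dV$ are $d$ vectors in $\C^n$. As a real vector space, $\C^n \simeq \R^{2n}$ has dimension $2n$. Any family of more than $2n$ vectors in it is $\R$-linearly dependent. Hence, for every $V\in\C^n$, the vectors $A_1V,\ldots,A_dV$ are $\R$-linearly dependent, and Theorem \ref{theorange} gives that the interior of the Levi range at $0$ is empty.

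The same conclusion can be read off the proof of Theorem \ref{theorange} directly. The real differential $d_VL_0:\R^{2n}\to\R^d$ has rank at most $2n<d$ at every point, so every value of $L_0$ is critical. Sard's theorem then says the Levi range has measure zero, and a set of measure zero has empty interior.

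The hypotheses "strongly Levi nondegenerate" and $d\le n^2$ are not used in the argument. They only place the statement in the relevant regime: $d\le n^2$ is forced by Condition $\aaa$, and strong Levi nondegeneracy is the standing setting of the $\mathfrak{D}$-nondegeneracy discussion. I see no real obstacle here. The only thing to check is that "$\R$-linearly independent" refers to the real structure of $\C^n$, which has dimension $2n$.
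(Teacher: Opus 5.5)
Your proof is correct and matches the paper, which derives this corollary from Theorem \ref{theorange} ``for dimensional reasons'': $d>2n$ vectors in $\C^n\simeq\R^{2n}$ are always $\R$-linearly dependent. Your alternative Sard argument and your remark that the hypotheses of strong Levi nondegeneracy and $d\le n^2$ are not used in the argument are also accurate.
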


In light of \cite{be-me}, we can  connect the rank of the Levi map with the defect of analytic discs. We recall these notions now. We denote by $
\Delta$ the unit disc in $\C$ and by $\partial \Delta$ its boundary. An {\it analytic disc} is a smooth map $f: \overline{\Delta} \to \C^N$ which is holomorphic on $\Delta$. Such a disc is {\it attached to M} if $f(\partial \Delta) \subset M$. Assume now that $M$ is of the form \eqref{eqred} and denote by $r_1,\ldots,r_d$ its  defining functions (corresponding to each line of  \eqref{eqred}). 

Now, according to \cite{tu0} (see also \cite{ba-ro-tr}, \cite{ber}), an analytic disc $f$ attached to $M$ is {\it defective} if there exists a continuous 
map 
$c: \partial \Delta \to \R^{d}$ such that the map 
$$\zeta \mapsto \sum_{j=1}^d c_j(\zeta) \partial r_j(f(\zeta),\overline{f(\zeta)})$$
defined on $\partial \Delta$ extends holomorphically on $\Delta$. The disc is  {\it nondefective} if it is not defective. We recall that the existence of a small nondefective disc ensures the wedge extendability of CR maps  \cite{tu0} (see also Theorem 8.6.1 \cite{ber}).   
We also define the {\it defect} of an analytic  disc attached to $M$ by
$${\rm def}(f):={\rm dim}_\R \{c:\partial \Delta \to \R^d \ | \ \zeta \mapsto c(\zeta)\partial r(f(\zeta),\overline{f(\zeta)}) \mbox{ extends holomorphically on } \Delta\}.$$
Thus, $f$ is nondefective if and only if  ${\rm def}(f)=0$. 

It is important to note an analytic disc $f=(h,g)$ attached to a {\it model quadric} $M$ is defective whenever there exists a constant $c=(c_1,\ldots,c_d) \in \R^d\setminus\{0\}$ such that the map 
$$\zeta \mapsto  \sum_{j=1}^d c_j \partial_z r_j(h(\zeta),\overline{h(\zeta)})$$
defined on $\partial \Delta$ extends holomorphically on $\Delta$. This  follows from the fact that 
$\partial_w r_1,\ldots \partial_w r_d$ are constant. In that case, the defect of such a  disc is simply 
$${\rm def}(f)={\rm dim}_\R \{c \in \R^d \ | \ \zeta \mapsto  \sum_{j=1}^d c_j \partial_z r_j(h(\zeta),\overline{h(\zeta)}) \mbox{ extends holomorphically on } \Delta\}.$$

We then recall the following  lemma.
\begin{lemma}[\cite{be-me}]\label{lemclio}
Let $M \subset \C^N$ be a model quadric and let $f=(h,g)$ be an analytic disc attached to $M$ with $h$ of the form 
\begin{equation}\label{eqdisc}
h(\zeta)=(1-\zeta)V
\end{equation}
 for some $V \in \C^n$. Then $f$ is nondefective if and only if $A_1V,\ldots,A_dV$ are $\R$-linearly independent.
\end{lemma}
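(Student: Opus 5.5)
We first make the disc explicit. For a model quadric the defining functions are $r_j(z,w)=\Re e\, w_j-\transp\bar z A_j z$, so $\partial_z r_j(z,\bar z)=-\transp\bar z A_j$, a row vector conjugate-linear in $z$. Since $f=(h,g)$ is attached to $M$, it is enough to study the $z$-part $h(\zeta)=(1-\zeta)V$; the component $g$ exists by the usual Hilbert transform construction and does not affect the defect. On $\partial\Delta$ we have $\bar\zeta=1/\zeta$, so
$$\sum_{j=1}^d c_j\partial_z r_j(h(\zeta),\overline{h(\zeta)})=-(1-\bar\zeta)\sum_j c_j\transp\bar V A_j=-\Big(1-\frac1\zeta\Big)\transp\Big(\sum_j c_jA_j\bar V\Big),$$
where we used that the $A_j$ are Hermitian and the $c_j$ real, so that $\transp\bar VA_j=\transp(\overline{A_jV})$. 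By the remark preceding the lemma, for model quadrics $f$ is defective exactly when some constant $c\in\R^d\setminus\{0\}$ makes this map extend holomorphically to $\Delta$.

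The second step is the extension criterion. The map above equals $-(1-\zeta^{-1})u$ with the constant vector $u=\sum_j c_j\overline{A_jV}\in\C^n$. Write it as $-u+\zeta^{-1}u$. A function on $\partial\Delta$ extends holomorphically to $\Delta$ if and only if its negative Fourier coefficients vanish. The coefficient of $\zeta^{-1}$ here is $u$, so the map extends if and only if $u=0$. Conjugating, $u=0$ is equivalent to $\sum_j c_jA_jV=0$.

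Combining the two steps, $f$ is defective if and only if there is a real $c\neq0$ with $\sum_j c_jA_jV=0$, that is, if and only if $A_1V,\ldots,A_dV$ are $\R$-linearly dependent. Taking the negation gives the lemma. In fact the same computation shows that ${\rm def}(f)$ is the dimension of the real kernel of $c\mapsto\sum_j c_jA_jV$.

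The main subtlety is justifying the reduction to constant $c$, and this is where I would spend the most care. If one works with a general continuous $c(\zeta)$, the $w$-components $c(\zeta)\,\partial_w r$ are constant multiples of $c(\zeta)$. Holomorphic extension of these components forces $c$ to be both real-valued and the boundary value of a holomorphic function, hence constant. This is the reduction already recorded in the remark above, so I would simply invoke it.
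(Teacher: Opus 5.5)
Your proof is correct. The paper itself only cites \cite{be-me}, so there is no written proof to compare against, but your argument follows the route its text indicates: the remark that $\partial_w r_j$ are constant reduces defectiveness to constant $c$, and your reading of the $\zeta^{-1}$ coefficient is the ``straightforward'' computation behind Corollary~\ref{cordisc}. Your observation that a real-valued $c$ whose boundary map extends holomorphically must be constant makes explicit the direction that remark leaves implicit. One small slip: the last expression in your first display should be $\transp\big(\sum_j c_j\overline{A_jV}\big)$ rather than $\transp\big(\sum_j c_jA_j\bar V\big)$; you use the correct vector $u$ afterwards.
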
 
We emphasize that analytic discs of the form \eqref{eqdisc} are particularly important. They are indeed {\it stationary} in the sense of \cite{le,tu} and  
play a key role in obtaining the $2$-jet determination of CR automorphisms of  smooth perturbations of $M$ \cite{be-bl-me,be-me,tu3}.  

In the context of the previous lemma, and still denoting by $D$ the $n\times d$ matrix whose $j^{th}$ column is $A_jV$, we then have the following straightforward result. 
\begin{cor}\label{cordisc}
Let  $M$ be a   model  quadric  and let $f=(h,g)$ be an analytic disc for $M$ with $h$ of the form \eqref{eqdisc}. Then  
$${\rm def}(f)=\dim {\rm ker}D_{|\R^d}=d-{\rm dim} \ {\rm span}_\R\{A_1V,\ldots,A_dV\}$$
and the rank of the real differential of Levi map $L_0$ at $V$ is  
$${\rm rank}(d_VL_0)=d - {\rm def}(f).$$ 

In particular, the Levi range at $0$ of $M$ contains an open set if and only if $M$ admits a nondefective analytic disc of the form \eqref{eqdisc}.   
\end{cor}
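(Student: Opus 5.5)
We will reduce Corollary \ref{cordisc} to a direct computation of the defect for discs of the form \eqref{eqdisc}, in the spirit of Lemma \ref{lemclio}, and then compare the result with the differential of $L_0$ computed in the proof of Theorem \ref{theorange}. For a model quadric, $r_j=-\Re e\, w_j+\transp\bar z A_j z$. Hence $\partial_z r_j(h,\bar h)=\transp\bar h A_j$, up to a harmless normalization. As recalled above, for model quadrics ${\rm def}(f)$ is the real dimension of the space of constants $c\in\R^d$ for which
$$\zeta\mapsto \sum_j c_j\,\transp\overline{h(\zeta)}A_j$$
extends holomorphically to $\Delta$. With $h(\zeta)=(1-\zeta)V$ we have $\overline{h(\zeta)}=(1-\bar\zeta)\bar V=(1-1/\zeta)\bar V$ on $\partial\Delta$. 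The map above therefore equals $(1-\zeta^{-1})\,\transp\bar V\sum_j c_jA_j$. This is a Laurent polynomial, and it extends holomorphically if and only if the coefficient of $\zeta^{-1}$ vanishes. That happens exactly when $\sum_j c_j\,\transp\bar V A_j=0$, or equivalently (conjugating, since the $A_j$ are Hermitian and $c$ is real) when $\sum_j c_jA_jV=Dc=0$.

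This gives ${\rm def}(f)=\dim_\R\ker D_{|\R^d}$, where $D$ is viewed as a real-linear map $\R^d\to\C^n$. By rank--nullity, this equals $d-\dim{\rm span}_\R\{A_1V,\ldots,A_dV\}$, because the image of $D_{|\R^d}$ is exactly that real span.

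For the rank statement, we use the formula from the proof of Theorem \ref{theorange}, namely $d_VL_0(W)=2\Re e(\transp D\overline W)$. The real rank of this map $\R^{2n}\to\R^d$ equals the rank of its transpose. For $c\in\R^d$, the functional $W\mapsto c\cdot d_VL_0(W)=2\Re e(\transp(Dc)\overline W)$ vanishes identically if and only if $Dc=0$. So the annihilator of the image of $d_VL_0$ is $\ker D_{|\R^d}$. It follows that ${\rm rank}(d_VL_0)=d-\dim\ker D_{|\R^d}=d-{\rm def}(f)$.

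The final assertion combines three facts: Theorem \ref{theorange}, the equivalence between nondefectiveness and ${\rm def}(f)=0$, and the formula just proved (equivalently, Lemma \ref{lemclio}). The range has nonempty interior if and only if some $V$ makes $A_1V,\ldots,A_dV$ $\R$-independent, which holds if and only if the disc $h(\zeta)=(1-\zeta)V$ has zero defect. Such a disc is attached to $M$ once $g$ is chosen by solving $\Re e\, g_j=\transp\bar hA_jh$ on $\partial\Delta$ via the Hilbert transform. This is always possible, so every $V$ yields a disc. The only delicate point is the reduction of the defect to constant $c$ for model quadrics, and that is taken from the preceding remark.
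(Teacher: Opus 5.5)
Your proof is correct and follows the route the paper intends for this ``straightforward'' corollary. The defect is computed as in Lemma \ref{lemclio}: reduce to constant $c$ for a model quadric, then the vanishing of the $\zeta^{-1}$ coefficient of $(1-\zeta^{-1})\transp\bar V\sum_j c_jA_j$ is equivalent to $Dc=0$. The rank comes from the matrix $(\Re e(\transp D)\ \ \Im m(\transp D))$ of $d_VL_0$ in the proof of Theorem \ref{theorange}, whose rows are the real forms of the $A_jV$; your annihilator argument is the dual phrasing of that row-rank count, and the final equivalence then follows from Theorem \ref{theorange} as you say.
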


We now illustrate the results obtained in the present section so far with an example borrowed from \cite{gr-me}. 

\begin{example}
Consider in $\C^{10}$, the submanifold of codimension $4$ defined by  
$$A_1=
\begin{pmatrix}
	0& -i & 0 & 0 & 0 & 0 \\ i &0 & 0 & 0 & 0 & 0 \\ 0 &0 & 0 & 0 & 0 & 0 \\ 0 &0 & 0 & 0 & 0 & 0 \\ 0 &0 & 0 & 0 & 0 & 0 \\ 0 &0 & 0 & 0 & 0 & 0
\end{pmatrix} \ \ \ \ 
A_2=
\begin{pmatrix}
	0& 0 & 0 & 0 & 0 & 0 \\ 0 &0 & -i & 0 & 0 & 0 \\ 0 &i & 0 & 0 & 0 & 0 \\ 0 &0 & 0 & 0 & 0 & 0 \\ 0 &0 & 0 & 0 & 0 & 0 \\ 0 &0 & 0 & 0 & 0 & 0
\end{pmatrix}$$
$$A_3=
\begin{pmatrix}
	0& 0 & -i & 0 & 0 & 0 \\ 0 &0 & 0 & 0 & 0 & 0 \\ i &0 & 0 & 0 & 0 & 0 \\ 0 &0 & 0 & 0 & 0 & 0 \\ 0 &0 & 0 & 0 & 0 & 0 \\ 0 &0 & 0 & 0 & 0 & 0
\end{pmatrix} \ \ \ \ 
 A_4=
\begin{pmatrix}
	0& 0 & 0 & 0 & 0 & 1 \\ 0 &0 & 0 & 0 & 1 & 0 \\ 0 &0 & 0 & 1 & 0 & 0 \\ 0 &0 & 1 & 0 & 0 & 0 \\ 0 &1 & 0 & 0 & 0 & 0 \\ 1 &0 & 0 & 0 & 0 & 0
\end{pmatrix}.$$
Note first for $V_0=\transp(1,1,i,0,0,0)$, the vectors $A_1V_0,A_2V_0,A_3V_0,A_4V_0$ are $\R$-linearly independent. Thus, the interior of its Levi range is nonempty. The Levi map $L_0: \C^6 \to \R^4$ is equal to
\begin{eqnarray*}
L_0(V)& =& (2\Im m (\overline{V_1}V_2),2\Im m (\overline{V_2}V_3),2\Im m (\overline{V_1}V_3),2\Re e (V_1\overline{V_6}+V_2\overline{V_5}+V_3\overline{V_4})\\
\\
& =& 2(x_1y_2-y_1x_2,x_2y_3-y_2x_3,x_1y_3-y_1x_3,x_1x_6+y_1y_6+x_2x_5+y_2y_5+x_3x_4+y_3y_4)\\
\end{eqnarray*} 
where $V_j=x_j+iy_j, j=1,\ldots,4.$ It follows that its differential is given by 
$$d_VL_0=
2\begin{pmatrix}
	y_2& -x_2 & -y_1 & x_1 & 0& 0 & 0 & 0 & 0 & 0  & 0 & 0 \\ 
	0& 0  & y_3 & -x_3 & -y_2 & x_2 & 0 & 0 & 0& 0  & 0 & 0\\ 
	y_3& -x_3 & 0 & 0  & -y_1 & x_1 & 0 & 0 & 0  & 0 & 0 & 0\\ 
	x_6& y_6 & x_5 & y_5& x_4 & y_4 & x_3 & y_3 & x_2 & y_2 & x_1 & y_1 \\ 
\end{pmatrix}.$$
The rank of $d_VL_0$ may be equal to $4$, $3$ (e.g for $V_1=V_2=0$ and $V_3 \neq 0$), $1$ (e.g. for $V_1=V_2=V_3=0$ and  $V_6 \neq 0$), and $0$ (for $V=0$). Note that it cannot however be equal to $2$ essentially due to the last row. Rephrasing now in terms of defect by considering an analytic disc $f=(h,g)$ for $M$. If 
$h(\zeta)=(1-\zeta,1-\zeta,i(1-\zeta),0,0,0)$ then $f$ is nondefective. 
When $h(\zeta)=(0,0,1-\zeta,0,0,0)$ then the defect of $f$ is equal to 1, while it is 3 when  $h(\zeta)=(0,0,0,0,0,1-\zeta)$. No analytic disc with 
$h$ of the form $h(\zeta)=(1-\zeta)V$ has defect 2. Finally, note that although the Levi map reaches the maximal rank $4$, $CR$ automorphisms of $M$ are determined by their $3$-jet but not their $2$-jet.          
\end{example}

\begin{remark}
The previous example also shows that not all ranks are achievable. The consequences of such gaps are relatively clear on the geometry of the range in $\R^d$ but do not seem to have much impact on mapping problems for $M$. More precisely, the Levi map of the submanifold
defined by 
$$
\begin{pmatrix}
	1& 0 & 0 \\ 0 &0 & 0 \\ 0 &0 & 0
\end{pmatrix} \ \ \ \ \begin{pmatrix}
	0& 0 & 0 \\ 0 &1 & 0 \\ 0 &0 & 0 
\end{pmatrix}\ \ \ \ \begin{pmatrix}
	0& 0 & 0 \\ 0 &0 & 0 \\ 0 &0 & 1 
\end{pmatrix}$$
assumes all ranks in $\{0,1,2,3\}$ while the one of  the submanifold
considered in example \ref{exrank} assumes the ranks $0,2$, and $3$. Both submanifolds are strongly pseudoconvex submanifold and thus satisfy the 2-jet determination of CR-automorphisms \cite{tu3}.
\end{remark}

Assume that  $M$ given by \eqref{eqred} admits a vector $V \in \C^n$ such that $A_1V,\ldots,A_dV$ are $\C$-linearly independent; for instance, one can simply assume that $M$ is fully nondegenerate. We already know from Theorem \ref{theorange} that its Levi range has nonempty interior. This raises the following question. 
 
\vspace{0.1cm}

\noindent {\bf Question:} Can  something more be said? In other words, is there a property of the Levi map that distinguishes $\R$-linearity and  $\C$-linearity  of the family $A_1V,\ldots,A_dV$? 

\vspace{0.5cm}

Theorem \ref{theorange} fully characterizes submanifolds for which the Levi map assumes the maximal rank $d$. It turns out that when $d=2$ or $3$, this is always the case, under the necessary condition  that $n \ge 2.$

\begin{prop}\label{propd=2}
Let $M$ be a CR submanifold given by \eqref{eqred} of codimension $2$ in $\C^{n+2}$ with $n\ge 2$. If $M$ satisfies ($\mathfrak{a}$)  then its Levi range at $0$ has nonempty interior. 
\end{prop}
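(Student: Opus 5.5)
By Theorem \ref{theorange}, it suffices to find $V\in\C^n$ such that $A_1V$ and $A_2V$ are $\R$-linearly independent. I will argue by contradiction: assume that for every $V\in\C^n$ the vectors $A_1V,A_2V$ are $\R$-linearly dependent, and deduce that $A_1,A_2$ are $\R$-linearly dependent, contradicting Condition $\aaa$. By $\aaa$ we have $A_1\neq 0$, so the set $U:=\{V \ | \ A_1V\neq 0\}$ is the complement of a proper linear subspace, hence open and dense. For each $V\in U$ the dependence assumption gives a unique $\lambda(V)\in\R$ with $A_2V=\lambda(V)A_1V$. The plan is to show that $\lambda$ is constant on a dense set, or to handle separately the degenerate situation where this fails.

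\emph{Case ${\rm rank}\,A_1\geq 2$.} Take $V,W\in U$ with $A_1V,A_1W$ $\C$-linearly independent, and let $\alpha\in\C$ with $\alpha\neq 0$. Then $V+\alpha W\in U$, and linearity of $A_2$ gives
$$\lambda(V+\alpha W)A_1V+\alpha\lambda(V+\alpha W)A_1W=\lambda(V)A_1V+\alpha\lambda(W)A_1W.$$
Comparing coefficients in the independent pair $A_1V,A_1W$ yields $\lambda(V)=\lambda(V+\alpha W)=\lambda(W)$. Now let $U'\subset U$ be the set of $V$ such that $A_1V$ is not $\C$-proportional to a fixed vector $A_1V_0$. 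Since ${\rm rank}\,A_1\ge 2$, $U'$ is still dense. Any two points of $U'$ are either directly comparable by the argument above, or comparable through $V_0$, or through a third generic point $W$ with $A_1W$ independent of both images; such a $W$ exists because the image of $A_1$ has complex dimension at least $2$, and a complex plane is not a union of two complex lines. Hence $\lambda\equiv\lambda_0$ on a dense set, so $A_2=\lambda_0A_1$ by continuity. This is a real linear dependence, contradicting $\aaa$.

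\emph{Case ${\rm rank}\,A_1=1$.} Since $A_1$ is Hermitian of rank one, $A_1=a\,u\transp\bar u$ with $a\in\R^*$. For $V\in U$ we have $A_2V\in \R A_1V\subset \C u$, so by density the range of $A_2$ lies in $\C u$. A Hermitian matrix whose range lies in $\C u$ is of the form $b\,u\transp\bar u$ with $b$ real. This again contradicts $\aaa$.

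The case $n=1$ is vacuous: both matrices are then real scalars and cannot satisfy $\aaa$. This is consistent with the hypothesis $n\ge 2$. The main obstacle I expect is the connectivity argument in the first case, that is, making rigorous that the pairwise comparisons propagate $\lambda$ to a constant on a dense set. If that step becomes awkward, I would instead fix a basis $e_1,e_2,\dots$ with $A_1e_1,A_1e_2$ independent, show $\lambda(e_1)=\lambda(e_2)=\lambda(V)$ for all generic $V$ using $V+\alpha e_k$, and conclude directly.
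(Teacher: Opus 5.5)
Your proof is correct, and it takes a genuinely different route from the paper's.

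\textbf{The paper's route.} The paper splits according to invertibility. If $A_1$ is invertible, the dependence assumption makes every vector an eigenvector of $A_1^{-1}A_2$, which forces a scalar matrix. If neither matrix is invertible, it brings $A_1$ to the congruence normal form ${\rm diag}(\tilde{A_1},0)$ and writes $A_2$ in blocks $\tilde{A_2},X,Y$. It then treats three subcases: $X\neq 0$ and $X=0\neq Y$ by explicit choices of $V$, and $X=Y=0$ by reduction to the invertible case for $\tilde{A_1},\tilde{A_2}$.

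\textbf{Your route.} You run the ``every vector is an eigenvector'' argument directly on the pencil $A_2V=\lambda(V)A_1V$ over the dense set $U=\{A_1V\neq 0\}$. This avoids inverting $A_1$, changing coordinates, and the block case analysis. Hermitian symmetry enters only in your rank-one case.

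\textbf{What each buys.} The paper's route produces explicit vectors $V$ in the degenerate subcases. Yours is shorter and coordinate-free.

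\textbf{Two small remarks.}
\begin{enumerate}
\item The connectivity step you flagged as the main obstacle is immediate. By the very definition of $U'$, every $V\in U'$ is directly comparable with $V_0$, so $\lambda\equiv\lambda(V_0)$ on $U'$ with no further chaining needed.
\item The rank split can be dropped entirely. Your coefficient comparison with $\alpha=1$ only uses that $A_1V,A_1W$ are $\R$-linearly independent. If instead $A_1W\in\R A_1V$, then $A_1(iV)=iA_1V$ is $\R$-independent of both, so $\lambda(V)=\lambda(iV)=\lambda(W)$. Thus $\lambda$ is constant on all of $U$ whatever the rank of $A_1$. The same argument shows that any two $\R$-linearly independent complex $n\times n$ matrices, Hermitian or not, admit $V$ with $A_1V,A_2V$ $\R$-linearly independent.
\end{enumerate}
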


We note that while this proposition was known and proved by the authors, we present a  proof whose first part is due to Tumanov from a private communication.  
\begin{proof}
Let $A_1,A_2$ be two $n\times n$  matrices which are linearly independent. 

We first assume  that one of these matrices, say $A_1$, is invertible.
Assume by contradiction that for all  $V \in \C^n$, there exist $\lambda$  and  $\mu$  not all zero such that $\mu A_2V=\lambda A_1V.$ Note that since $A_1$ is invertible, $\mu \ne 0$ for all $V\ne 0.$  Without loss of generality, we  then assume that there exists  $\lambda$  such  that  $A_1^{-1}A_2V=\lambda V$. Since any vector is an eigenvector of  $A_1^{-1}A_2$, it follows that $A_1^{-1}A_2$ is diagonalizable, and hence all the eigenvalues are equal. The matrix $A_1^{-1}A_2$ is then  a scalar multiple of the identity, contradicting the fact that $A_1$ and $A_2$ are linearly independent.

\vspace{0.2cm} 
Assume now that none of the matrices $A_1,A_2$ is invertible. After a change of coordinates, we suppose that $A_1$ is of the form 
$$A_1=
 \begin{pmatrix} \tilde{A_1} & 0 \\ 
 0 & 0 \\
 \end{pmatrix} \ \ \ A_2= 
 \begin{pmatrix} \tilde{A_2} & \transp \overline{X} \\ 
 X & Y \\
 \end{pmatrix} \ \ \ $$
where $ \tilde{A_1}$ is a $n_1\times n_1$ diagonal matrix with $\pm 1$ and, setting $n_2=n-n_1$  where  $\tilde{A_2}, X$ and $Y$ are matrices of respective size $n_1\times n_1$,  $n_2 \times n_1$ and $n_2 \times n_2$. We distinguish three cases.    

\noindent \underline {Case 1:} if $X \neq 0$  we then take $V=\transp (U,0)$ with $U \in \C^{n_1} \setminus \{0\}$ and such that  $U \notin Ker X$. In which case $A_1V$ and $A_2 V$ are $\R$-linearly independent.

\noindent \underline {Case 2:} if $X=0$ and $Y \neq 0$, then taking $V=\transp (U,W)$ with $U \in \C^{n_1} \setminus \{0\}$ and $W \notin Ker Y$ implies that  $A_1V$ and $A_2 V$ are $\R$-linearly independent.

\noindent \underline {Case 3:} if both  $X$ and $Y$ are zero then the matrices $ \tilde{A_1}$ and $ \tilde{A_2}$ are linearly independent and the first part of the proof provides a vector $U$ such that  $\tilde{A_1}U$ and $ \tilde{A_2}U $ are $\R$-linearly independent. It follows that for $V=\transp (U,0)$, the vectors $A_1V$ and $A_2V$ are $\R$-linearly independent. 
\end{proof}

For codimension 3, we have  

\begin{theo}\label{theod=3}
Let $M$ be a CR submanifold of the form \eqref{eqred} of codimension $3$ in $\C^{n+3}$ with $n\geq 2$. If $M$ satisfies ($\mathfrak{a}$) and is strongly Levi nondegenerate then its Levi range at $0$ has nonempty interior. 
\end{theo}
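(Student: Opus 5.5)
By Theorem \ref{theorange}, it suffices to find $V\in\C^n$ such that $A_1V,A_2V,A_3V$ are $\R$-linearly independent. The idea is to suppose no such $V$ exists and to show that the resulting real dependence relation has constant coefficients, which then contradicts ($\mathfrak{a}$).

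\emph{Normalization.} Since $M$ is strongly Levi nondegenerate, some real combination $\sum c_jA_j$ is invertible. Replacing $A_1,A_2,A_3$ by a suitable real basis of their span does not change the property we want, nor condition ($\mathfrak{a}$). So we may assume $A_1$ is invertible. Set $B_j:=A_1^{-1}A_j$ for $j=2,3$. Multiplying by the invertible matrix $A_1^{-1}$ preserves $\R$-linear independence, so the goal becomes: find $V$ with $V,B_2V,B_3V$ $\R$-linearly independent. By ($\mathfrak{a}$), $B_2$ is not a scalar multiple of the identity, and no nontrivial real combination of $I,B_2,B_3$ vanishes.

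\emph{The generic set $U$.} As in the first part of the proof of Proposition \ref{propd=2}, the set of $V$ for which $V$ and $B_2V$ are $\C$-linearly dependent is the union of the eigenspaces of $B_2$. Since $B_2$ is not scalar and $n\ge 2$, these are finitely many proper complex subspaces, each of real codimension at least $2$. Hence the complement $U$ is open, dense and connected.

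\emph{Real coefficients.} Argue by contradiction and assume that for every $V$ there is a nontrivial real relation $aV+bB_2V+cB_3V=0$. For $V\in U$ we must have $c\neq0$, since otherwise $V$ and $B_2V$ would be dependent. Therefore
$$B_3V=\alpha(V)V+\beta(V)B_2V \quad\text{with } \alpha(V),\beta(V)\in\R .$$
Because $V,B_2V$ are $\C$-independent on $U$, the coefficients $\alpha,\beta$ are uniquely determined as complex numbers. By Cramer's rule applied to a nonvanishing $2\times2$ minor, they are locally rational (hence smooth) functions of $(V,\overline V)$, and by hypothesis they take real values.

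\emph{Key step: the coefficients are constant.} Differentiate the identity at $V\in U$ in a direction $W$:
$$B_3W=\alpha W+\beta B_2W+d\alpha_V(W)\,V+d\beta_V(W)\,B_2V.$$
Now apply the same identity with $W$ replaced by $iW$, and compare it with $i$ times the identity for $W$. The left side and the terms $\alpha W+\beta B_2W$ are $\C$-linear in $W$, so they cancel. Using the $\C$-independence of $V$ and $B_2V$ we obtain
$$d\alpha_V(iW)=i\,d\alpha_V(W), \qquad d\beta_V(iW)=i\,d\beta_V(W).$$
Both sides of each equation must be real, which forces $d\alpha_V=d\beta_V=0$. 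Since $U$ is connected, $\alpha$ and $\beta$ are constant on $U$.

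\emph{Conclusion.} We now have $B_3V=\alpha V+\beta B_2V$ for all $V$ in the dense set $U$, hence for all $V\in\C^n$. Thus $B_3=\alpha I+\beta B_2$, that is, $A_3=\alpha A_1+\beta A_2$ with $\alpha,\beta$ real. This contradicts ($\mathfrak{a}$).

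I expect the main obstacle to be making the constancy step rigorous. One must check that $\alpha$ and $\beta$ are genuinely smooth, which follows from the local Cramer formulas. One must also check that $U$ is connected, which relies on the exceptional set being a finite union of complex subspaces of real codimension at least $2$; this is exactly where $n\ge2$ and the non-scalar nature of $B_2$ enter. Strong Levi nondegeneracy is used only to make $A_1$ invertible.
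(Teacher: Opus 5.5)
Your proof is correct, and it takes a genuinely different route from the paper's.

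\textbf{The paper's route.} For $n=2$, the paper uses ($\mathfrak{a}$) to get a nonzero $3\times 3$ minor of the $4\times 3$ matrix of real entries of $A_1,A_2,A_3$. Accordingly, one of the four test vectors $(1,0)$, $(0,1)$, $(i,1)$, $(1,1)$ works. For general $n$, it proceeds in four steps:
\begin{enumerate}
\item It uses Proposition~\ref{propd=2} to make $A_2V,A_3V$ generically $\R$-independent.
\item It solves $A_1V=\lambda_2(V)A_2V+\lambda_3(V)A_3V$ for real $\lambda_2,\lambda_3$ by Cramer's rule in real coordinates.
\item It restricts to a level set $\{\lambda_2=c\lambda_3\}$, a real quadric hypersurface near some $V_0$. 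On it, every vector is an eigenvector of $A_1^{-1}(A_2+cA_3)$ with a locally constant eigenvalue.
\item It reaches a contradiction through a spanning argument or a uniqueness property for CR maps.
\end{enumerate}

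\textbf{Your route.} You isolate the coefficient of $A_3$ instead. You then use that $V$ and $B_2V$ are $\C$-independent off the eigenspaces of $B_2$, so $\alpha,\beta$ come from \emph{complex} Cramer's rule. They are therefore holomorphic rational functions of $V$ that happen to be real-valued, hence locally constant. Your identity $d\alpha_V(iW)=i\,d\alpha_V(W)$ is exactly the Cauchy--Riemann equations.

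\textbf{Comparison.} Your argument gives $A_3=\alpha A_1+\beta A_2$ with $\alpha,\beta$ real at once, and it treats all $n\ge 2$ uniformly. It also bypasses three steps that the paper treats only briefly: the dimension count for the level set, the constancy of the eigenvalue along it, and the final CR-uniqueness step. You can even drop the connectedness and density of $U$. Constancy of $\alpha,\beta$ on any small open set already forces $B_3=\alpha I+\beta B_2$, since an open subset of $\C^n$ spans $\C^n$ over $\R$. What the paper's route offers in exchange is, for $n=2$, an explicit finite list of test vectors, one of which always works.
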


In the proof of this theorem, we first consider the dimension  $n=2$ since the proof in that case is simpler and interesting on its own. The second part of the proof works in fact for any $n \geq 2$. Also, recall that $D$ is the $n\times d$ matrix whose $j^{th}$ column is $A_jV$.
    
\begin{proof} 
We start with the case  $n=2,$ that is, when the submanifold  $M$ is of codimension 3 in $\C^5$. The submanifold $M$ is given by 
$$A_j=\begin{pmatrix}a_j & c_j \\ \overline{c_j} & b_j \end{pmatrix}$$
with $a_j,c_j \in \R$ and $c_j=\alpha_j+i\beta_j \in \C$, $j=1,2,3$. Since $A_1,A_2,A_3$ are linearly independent, one of the four $3\times 3$ minors of the following matrix is nonzero
$$\begin{pmatrix}
a_1 & a_2 & a_3 \\ 
\alpha_1 & \alpha_2 & \alpha_3 \\ 
\beta_1 & \beta_2 & \beta_3 \\ 
c_1 & c_2 & c_3 \\ 
 \end{pmatrix}.$$
We denote by $m_\ell$ the $3\times 3$ minor of this matrix obtained by removing the $\ell^{th}$ row. We then consider four cases. 

\noindent \underline{Case 1:} if $m_4\neq 0$, we claim that for $V=\transp (1 \  0)$ the three vectors $A_1V,A_2V$ and $A_3V$ are $\R$-linearly independent. This follows from the fact that the 
minor $m_4$ is one of the $3\times 3$ minor in the matrix 
$$(\Re e (\transp D) \ \ \Im m  (\transp D))=\begin{pmatrix} 
a_1 & \alpha_1 & 0    & -\beta_1 \\ 
a_2 & \alpha_2 & 0   &-\beta_2  \\ 
a_3 & \alpha_3 & 0    & -\beta_3\\ 

\end{pmatrix}.$$  
  
\noindent \underline{Case 2:} if $m_1\neq 0$, then for $V=\transp (0 \  1)$ the vectors $A_1V,A_2V$ and $A_3V$ are $\R$-linearly independent since $m_1$ occurs in  
$$(\Re e (\transp D) \ \ \Im m  (\transp D))=
\begin{pmatrix} 
\alpha_1 & c_1  & \beta_1 & 0 \\ 
\alpha_2  & c_2 & \beta_2  & 0 \\ 
 \alpha_3  & c_3 & \beta_3& 0 \\ 

\end{pmatrix}.$$ 

\noindent \underline{Case 3:} now if $m_4=m_1=0$ and $m_3\neq 0$ then the vectors $A_1V,A_2V$ and $A_3V$, for $V=\transp (i \  1)$ are $\R$-linearly independent.  Indeed, the $3\times 3$ minor obtained in 
 $$(\Re e (\transp D) \ \ \Im m  (\transp D))=
\begin{pmatrix} 
\alpha_1 & c_1+\beta_1  & a_1+\beta_1 & \alpha_1 \\ 
\alpha_2  & c_2+\beta_2& a_2+\beta_2   & \alpha_2 \\ 
 \alpha_3  &c_3+\beta_3 & a_3+\beta_3 & \alpha_3 \\ 
\end{pmatrix} $$ 
 by removing its last column is exactly $m_3$.  

\noindent \underline{Case 4:} finally if $m_4=m_1=0$ and $m_2\neq 0$ then the vectors $A_1V,A_2V$ and $A_3V$, for $V=\transp (1 \  1)$ are $\R$-linearly independent since  the $3\times 3$ minor obtained in 
 $$(\Re e (\transp D) \ \ \Im m  (\transp D))=
\begin{pmatrix} 
a_1+\alpha_1 &  c_1+ \alpha_1 & \beta_1  &  -\beta_1 \\ 
a_2+\alpha_2 &  c_2+ \alpha_2  & \beta_2& -\beta_2 \\ 
a_3+\alpha_3 &   c_3+ \alpha_3  &\beta_3 & -\beta_3 \\ 
\end{pmatrix} $$ 
by removing its last column is exactly $m_2$. This proves the theorem in case $n=2$.    
         
\vspace{0.5cm}

We now suppose $n>2$. The submanifold $M$ is defined by three linearly independent Hermitian $n\times n$ matrices $A_1,A_2,A_3$. 
After a linear holomorphic change, we may assume that $A_1$ is  invertible.  We also suppose by 
contradiction that for all $V\in \C^n$, the three vectors $A_1V,A_2V,A_3V$ are $\R$-linearly dependent, that is, there are three real numbers $\lambda_1(V),\lambda_2(V),\lambda_3(V) \in \R$, not all zero, such that 
\begin{equation}\label{tutu}\lambda_1(V)A_1V+\lambda_2(V)A_2V+\lambda_3(V)A_3V=0.
\end{equation}
We define for any real number $k\neq 0$, 
$$E_c:=\{ V\in \C^n \ | \ \lambda_2(V) =k \lambda_3(V)\}.$$  
For $V  \in E_c \setminus\{0\},$ we have  $\lambda_2(V) =c  \lambda_3(V) \ne 0,$ and hence we may rewrite $E_c$ as 
$$E_c=\{ V\in \C^n \ | \ \lambda(V) A_1V=A_2V+cA_3V, \ \lambda(V) \in \R \}.$$
Note that since  $A_1$ is invertible, any vectors in $E_c$ is an eigenvector of the matrix $A_1^{-1}(A_2+cA_3)$.   
We claim that  there exists $c \in \R\setminus \{0\}$ for which $ E_c$  is  a real  algebraic variety  near some point of $E_k$ of dimension $2n-1$.  

Since $A_2$ and $A_3$ are linearly indepdendent, by   Proposition \ref{propd=2}, there exists $V \in \C^n$ such that $A_2V,A_3V$ are $\R$-linearly independent. Note in that case that $\lambda_1(V) \neq 0$. So we may rewrite \eqref{tutu}, outside of a thin set $X \subset \C^n$, as  
 $$A_1V=\lambda_2(V)A_2V+\lambda_3(V)A_3V.$$
 Now denoting  by $c_j^\ell(V)$, $j=1,2,3$ and $\ell=1,\ldots,2n$ the coefficients of $A_jV \in \R^{2n}$, we write the above equation as 
$$ \begin{pmatrix} 
c_1^1(V) \\ 
\vdots \\ 
c_1^{2n}(V) \\ 
\end{pmatrix} = \begin{pmatrix} 
c_2^1(V) & c_3^1(V)  \\ 
\vdots & \vdots\\ 
c_2^{2n}(V) & c_3^{2n}(V)  \\ 
\end{pmatrix} \begin{pmatrix} 
\lambda_2(V) \\ 
\lambda_3(V) \\ 
\end{pmatrix},$$
where the matrix  $\begin{pmatrix} 
c_2^1(V) & c_3^1(V)  \\ 
\vdots & \vdots\\ 
c_2^{2n}(V) & c_3^{2n}(V)  \\ 
\end{pmatrix}$ is of rank $2.$
Accordingly, we consider two its linearly independent rows, say the 
$\ell_1^{th}$ and $\ell_2^{th}$ rows. It follows from Cramer's rule that, outside of $X$,
$$\lambda_2(V)=\frac{{\rm det}\begin{pmatrix} 
c_1^{\ell_1}(V) & c_2^{\ell_1}(V)\\
c_1^{\ell_2}(V) & c_2^{\ell_2}(V)\\ 
\end{pmatrix}}{{\rm det}\begin{pmatrix} 
c_2^{\ell_1}(V) & c_3^{\ell_1}(V)\\
c_2^{\ell_2}(V) & c_3^{\ell_2}(V)\\ 
\end{pmatrix}} \ \ \ \ \lambda_3(V)=\frac{{\rm det}\begin{pmatrix} 
c_3^{\ell_1}(V) & c_1^{\ell_1}(V)\\
c_3^{\ell_2}(V) & c_1^{\ell_2}(V)\\ 
\end{pmatrix}}{{\rm det}\begin{pmatrix} 
c_2^{\ell_1}(V) & c_3^{\ell_1}(V)\\
c_2^{\ell_2}(V) & c_3^{\ell_2}(V)\\ 
\end{pmatrix}}.$$

Choose   $V_0 \in \C^n \setminus X$ for which    $\lambda_2(V_0)\lambda_3(V_0) \neq 0$ and  set $c:=\lambda_2(V_0)/\lambda_3(V_0)$. 
The equation $\lambda_2(V)= c\lambda_3(V)$ can be written as  
\begin{equation*}
{\rm det}\begin{pmatrix} 
c_1^{\ell_1}(V) & c_2^{\ell_1}(V)\\
c_1^{\ell_2}(V) & c_2^{\ell_2}(V)\\ 
\end{pmatrix}-c{\rm det}\begin{pmatrix} 
c_3^{\ell_1}(V) & c_1^{\ell_1}(V)\\
c_3^{\ell_2}(V) & c_1^{\ell_2}(V)\\ 
\end{pmatrix}=0.
\end{equation*}
Since each of the involved coefficient is linear in $V$, the left handside of this equation is a homogenenous polynomial of degree 2 in $V\in \R^{2n}$, denoted by $P$. We then have  $\dim E_c=2n-1$ near $V_0.$  Recall that each vector  $V\in P^{-1}(\{0\}) $ is an eigenvector of $A^{-1}
(A_2+cA_3)$ with the corresponding eigenvalue $\lambda(V)$. Note that $\lambda(V)$ depends continuously on $V$ and is thus 
locally constant since $A_{1}^{-1}(A_2+cA_3)$ has a finite number of eigenvalues. We  assume that  $=\lambda(V)$ is constant in some
 small enough  neighborhood $U \subset P^{-1}(\{0\}$ of $V_0$ and we denote this constant by $\lambda_0$.   We now distinguish two cases. 

\noindent \underline{Case 1:} if $U$ is not included in a real hyperplane of $\R^{2n}$, we can then find a basis of $\R^{2n}$ consisting of 
$2n$ linearly independent vectors in $U$, which implies that 
$$A_1^{-1}(A_2+cA_3)=\lambda_0 I_n$$ is a scalar multiple of the identity contradicting the fact that $A_1,A_2,A_3$ are linearly independent. 

\noindent \underline{Case 2:} if $U$ lies in a real hyperplane $\R^{2n}$,  we then have $\lambda_0 A_1V=(A_2 + cA_3)V$ on an open set of $\Bbb C^n$ by the unicity property for CR maps. This also contradicts the fact that $A_1,A_2,A_3$ are linearly independent.
\end{proof}

We now provide two examples, of codimensions 4 and 5,  that illustrate that beyond codimension 3, the Levi map does not reach full rank in general. 

\begin{example}
We consider in $\C^6$, the strongly pseudoconvex quadric of codimension $4$ defined by  
$$A_1=
\begin{pmatrix}
	1& 0  \\ 0 &0  
\end{pmatrix} \ \ \ \ 
A_2=
\begin{pmatrix}
	0 & 0  \\  0 & 1 \\ 
\end{pmatrix}\ \ \ \ 
A_3=
\begin{pmatrix}
	0& 1  \\  1 & 0 \\
\end{pmatrix}\ \ \ \  
A_4=
\begin{pmatrix}
	0& -i  \\ i  & 0 \\
\end{pmatrix}.$$
The Levi map $L_0: \C^2 \to \R^4$ is 
$$L_0(V)=(|V_1|^2,|V_2|^2,2\Re e (\overline{V_1}V_2),2\Im m (\overline{V_1}V_2)).$$ 
Writing $V_j=x_j+iy_j$, we have  
$$L_0(V)=(x_1^2+y_1^2,x_2^2+y_2^2,2x_1x_2+2y_1y_2,2x_1y_2-2x_2y_1).$$  
Its real differential at $V$ is given by 
$$d_VL_0=2
\begin{pmatrix}
x_1& y_1 & 0 & 0     \\ 
0& 0  & x_2 & y_2   \\ 
x_2& y_2 &x_1 & y_1    \\ 
y_2& -x_2 & -y_1 & x_1 \\ 
\end{pmatrix}$$
and is never of rank $4$. In fact the rank of the Levi map is $3$ whenever $V\neq 0$, and is $0$ when $V=0$. It follows that for all $V\in \C^2$ the vectors $A_1V,\ldots,A_4V$ are always $\R$-linearly dependent. It is interesting to note that according to Tumanov \cite{tu3}, this submanifold admits nondefective analytic  discs; such discs are not of the form $f=(h,g)$ with $h(\zeta)=(1-\zeta)V$. Finally, as pointed out by Boggess (see p. 205 \cite{bo}), note that the Levi range is not convex.  
\end{example}

The next example is taken from \cite{me} (see also \cite{gr-me}). 

\begin{example}\label{exfer}
Consider now in  $\C^9$, the quadric $M$ of codimension $5$ defined by  
$$A_1=
\begin{pmatrix}
	0& 1 & 0 & 0 \\ 1 &0 & 0 & 0 \\ 0 &0 & 0 & 0 \\ 0 &0 & 0 & 0
\end{pmatrix} \ \ \ \ 
A_2=
\begin{pmatrix}
	0 & -i & 0 & 0 \\ i  & 0 & 0 & 0 \\ 0 &0 & 0 & 0 \\ 0 &0 & 0 & 0
\end{pmatrix}\ \ \ \ 
A_3=
\begin{pmatrix}
	0& 0 & 0 & 1 \\ 0 &0 & 1 & 0 \\ 0 &1 & 0 & 0 \\ 1 &0 & 0 & 0
\end{pmatrix}$$
$$ A_4=
\begin{pmatrix}
	1& 0 & 0 & 0 \\ 0 &0 & 0 & 0 \\ 0 &0 & 0 & 0 \\ 0 &0 & 0 & 0
\end{pmatrix} \ \ \ \ 
A_5=
\begin{pmatrix}
	0& 0 & 0 & 0 \\ 0 & 1 & 0 & 0 \\ 0 &0 & 0 & 0 \\ 0 &0 & 0 & 0
\end{pmatrix}.$$
$M$ is  strongly Levi nondegenerate, satisfies $\aaa$ and is not $\mathfrak{d}$-nondegenerate. Indeed, for all $V \in \C^4$, the vectors $A_1V,\ldots,A_5V$ are $\R$-linearly dependent. This can be checked directly from the vanishing of all $5 \times 5$ minors of the below matrix $d_VL_0$. Moreover, we know from \cite{me} that CR automorphisms of $M$ are not determined by their $2$-jet, while CR automorphisms of $\mathfrak{d}$-nondegenerate sumbanifolds are determined by their $2$-jet \cite{be-me}. The Levi map $L_0: \C^4 \to \R^5$
$$L_0(V)=(2 \Re e(\overline{V_1}V_2),2\Im m (\overline{V_1}V_2),2\Re e (\overline{V_1}V_4+\overline{V_2}V_3),|V_1|^2,|V_2|^2).$$  
Writing $V_j=x_j+iy_j$, we have  
$$L_0(V)=(2x_1x_2+2y_1y_2,2x_1y_2-2y_1x_2,2x_1x_4+2x_2x_3+2y_1y_4+2y_2y_3,x_1^2+y_1^2,x_2^2+y_2^2).$$  
The Levi map $L_0$ is nowehere a submersion. Indeed, its differential is given by 
$$d_VL_0=2
\begin{pmatrix}
	x_2& y_2 & x_1 & y_1  & 0& 0& 0 & 0 \\ 
	y_2& -x_2 & -y_1 & x_1  & 0& 0& 0 & 0 \\ 
	x_4& y_4 & x_3 & y_3  &x_2& y_2& x_1 & y_1 \\ 
	x_1& y_1 & 0 & 0  & 0& 0& 0 & 0 \\ 
	0& 0 & x_2 & y_2  & 0& 0& 0 & 0 \\ 

\end{pmatrix}$$
and is of rank less than $5$ for all $V \in \C^4$. It is in fact of rank $4$  whenever $V_1\neq 0$ or $V_2 \neq 0$  since one can find a nonzero $4\times 4$ minor. If $V_1=V_2=0$, then its rank is $1$ if $V_3\neq 0$ or $V_4\neq 0$, and is equal to $0$ at $V=0$. \end{example}

\noindent {\bf Question:} In relation with the submanifold $M$ of Example \ref{exfer}, while all  analytic discs $f=(h,g)$ for $M$ with $h(\zeta)=(1-\zeta)V$ are defective, 
it is unclear whether or not $M$ admits a nondefective stationary disc.

\section{The  Levi range and the Levi cone}\label{sec3}
Let $M$ be a  CR sumbanifold of the form \eqref{eqred} with $d$  linearly independent $n\times n$  Hermitian matrices  $A_1,\ldots,A_d$. We know from Theorem \ref{theorange} 
that in case there is no $V\in \C^n$ such that $A_1V,\ldots, A_dV$ are $\R$-linearly independent then the Levi map 
does reach the maximal rank $d$, and thus, the Levi range differs with the Levi cone.  We then assume that there exists $V\in \C^n$ such that $A_1V,\ldots, A_dV$ are $\R$-linearly independent and we aim to understand when does the Levi range coincide with the Levi cone. 
In general, it is natural to understand geometric conditions that ensure that the Levi range coincides with the Levi range, and its consequences on mapping classification problems.  
 We need the following definition and lemma.

\begin{defi}
Let $A$ be a $n\times n$ Hermitian matrix. The {\it light cone of $A$} is the set
$$C_A=\{V\in \C^n \ | \ \transp \overline{V}AV=0\}.$$      
\end{defi}

Note that the intersection of the respective light cones of the matrices  $A_1,\ldots,A_d$ corresponds to the zero set of the Levi map
$$L_0^{-1}(0)=\{V\in \C^n \ | \ \transp \overline{V}A_1V=\ldots=\transp \overline{V}A_dV=0\}=C_{A_1} \cap C_{A_2} \cap \ldots \cap C_{A_d}.$$

The following definition is convenient for our purpose. 
\begin{defi}\label{defweak}
The submanifold $M$ is {\it weakly pseudoconvex} if there exist $c_1,\ldots,c_d\in \R$ such that the matrix $\sum_{j=1}^dc_jA_j$ is nonnegative, that is,  for all $V \in \C^n$ we have $\sum_{j=1}^dc_j\transp\overline{V}A_jV \geq 0$.   
\end{defi}
\begin{remark}\label{remlight}
In case $M$ is weakly pseudoconvex, its  Levi range at $0$, and thus the Levi cone at $0$, both lie on one side of a real hyperplane, e.g. $\sum_{j=1}c_jx_j=0$ where the $c$'s are the ones of Definition \ref{defweak}.  Conversely, if the Levi range is contained on one side a real hyperplane in $\R^d$ then  $M$ is  weakly pseudoconvex. In particular, when the codimension $d$ is equal to $2$, $M$ is  weakly pseudoconvex if and only if its Levi range lies in a half plane.   
\end{remark}

We now state the main result of this section. 

\begin{theo}\label{theorangecone}
Let $M \subset \C^{n+2}$ be a submanifold of codimension 2 of the form \eqref{eqred} with  two linearly independent $n\times n$ matrices $A_1,A_2$. Then
\begin{enumerate}[i.]
\item If $M$ is weakly pseudoconvex then the Levi range coincides with the Levi cone.
\item In case $n=2$, the Levi range  coincides with the Levi cone.
\end{enumerate}
\end{theo}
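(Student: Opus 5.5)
The plan is to prove a stronger statement, namely that for $d=2$ the Levi range $L_0(\C^n)$ is always convex. Since the Levi cone $\Gamma_0$ is by definition the convex hull of the Levi range, both (i) and (ii) then follow at once. The key identification is to view $\R^2\simeq\C$ and write
$$L_0(V)=\transp\overline{V}A_1V+i\,\transp\overline{V}A_2V=\transp\overline{V}(A_1+iA_2)V .$$
By the homogeneity $L_0(\lambda V)=|\lambda|^2L_0(V)$, the Levi range is the cone $\{tw \ | \ t\ge 0,\ w\in W\}$ over the numerical range $W=\{\transp\overline{V}(A_1+iA_2)V \ | \ |V|=1\}$. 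So it suffices to show that $W$ is convex, and then that a cone over a convex set is convex.

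The second point is immediate. If $x=t_1w_1$ and $y=t_2w_2$ with $t_1+t_2>0$, then
$$sx+(1-s)y=\big(st_1+(1-s)t_2\big)\,w',$$
where $w'$ is a convex combination of $w_1,w_2$ and so lies in $W$ once $W$ is convex. For the convexity of $W$ I would invoke the Toeplitz--Hausdorff theorem. To stay self-contained, I would reduce to the $2$-dimensional case. Given $V,V'$ on the unit sphere, restrict the Hermitian forms to the complex span of $V,V'$; this span has dimension at most $2$, and if it is $1$-dimensional then $L_0(V)$ and $L_0(V')$ coincide. In the $2$-dimensional case, write the map $U\mapsto(\transp\overline{U}B_1U,\transp\overline{U}B_2U)$ on the unit sphere $S^3\subset\C^2$ for the restricted $2\times 2$ Hermitian matrices $B_1,B_2$. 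Each such form is an affine function of the Hopf map $S^3\to S^2$, $U\mapsto(|U_1|^2-|U_2|^2,\,2\overline{U_1}U_2)$, because $2\times 2$ Hermitian matrices are spanned by $I$ and the Pauli matrices. The image is therefore the image of the $2$-sphere under an affine map $\R^3\to\R^2$, which is a filled, possibly degenerate, ellipse. This set is convex and contains the segment joining $L_0(V)$ and $L_0(V')$, which gives the convexity of $W$.

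For (i) and (ii) I would then simply note that the hypotheses (weak pseudoconvexity, respectively $n=2$) are not actually needed. If instead the intended proof must avoid numerical-range theory, the fallback is as follows. For (i), use Remark \ref{remlight} to place the range in a half plane. Then the range is a closed cone, namely the union of rays between two extreme directions, realized by vectors $V,V'$. Next, show that the segment between $L_0(V)$ and $L_0(V')$ is attained via the continuous path $\cos\theta\,V+e^{i\phi}\sin\theta\,V'$, using connectedness (intermediate value on the angle). For (ii), do a direct case analysis using Example \ref{ex2pi} when the cone is not contained in a half plane. The step I expect to require care is this angular connectedness argument, in particular handling degenerate cases where the extreme directions coincide with a line through $0$. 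The Hopf-map argument sidesteps this entirely, so I would lead with it.
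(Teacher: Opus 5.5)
Your proof is correct, and it takes a genuinely different route from the paper.

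\textbf{The paper's route.} The paper argues case by case.
\begin{enumerate}
\item For (i) it normalizes $A_1=\begin{pmatrix} I_{n_1} & 0\\ 0 & 0\end{pmatrix}$ and notes that $L_0^{-1}(0)\subset \mathrm{Ker}\,A_1$ has real codimension at least $2$. Hence $L_0(\C^n)\setminus\{0\}$ is connected. A connected union of rays lying in the half plane $\{x\ge 0\}$ is then a convex cone.
\item For (ii) it only treats the non weakly pseudoconvex case. It normalizes $A_1=\mathrm{diag}(1,-1)$ and turns the hypothesis into $(a+b)^2<4|c|^2$. It then checks by explicit parametrizations that every vertical line $x=\alpha$ lies in the range, so the range is $\R^2$.
\end{enumerate}

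\textbf{Your route and what it buys.} You read $L_0$ as the quadratic form of $A_1+iA_2$ and get convexity of the range from the Toeplitz--Hausdorff theorem. You reproduce its classical proof correctly: restrict to a two-dimensional span, then use the elliptical range of a $2\times 2$ matrix via the Hopf map. This gives a strictly stronger result. In codimension $2$ the Levi range is convex for every $n$, and neither weak pseudoconvexity nor linear independence of $A_1,A_2$ is needed. So your argument answers affirmatively the question the authors pose right after the theorem. Combined with Remark \ref{remlight}, and the fact that a convex cone in $\R^2$ not contained in a closed half plane is all of $\R^2$, it also shows the vertex angle lies in $[0,\pi]\cup\{2\pi\}$ for all $n$, not only $n=2$. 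In exchange, the paper's approach is fully elementary and gives explicit preimages.

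\textbf{Details to tighten.}
\begin{enumerate}
\item The image of $S^2$ under an affine map $\R^3\to\R^2$ is a filled ellipse because the linear part has a nonzero kernel. A line through any point of the closed ball in the kernel direction meets the sphere. So the image of the sphere equals the convex image of the ball.
\item In your cone argument, the case split should be on whether $st_1+(1-s)t_2>0$. If it vanishes, the point is $0$.
\end{enumerate}

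\textbf{Drop the fallback.} Its claim that the range is a closed cone whose two extreme directions are attained is false. Take $A_1=\begin{pmatrix}0& i\\ -i & 0\end{pmatrix}$ and $A_2=\begin{pmatrix}1&1\\1&1\end{pmatrix}$, which is weakly pseudoconvex with $n=2$. Then $L_0(V)=(-2\,\Im m(\overline{V_1}V_2),|V_1+V_2|^2)$, and the range is $\{y>0\}\cup\{0\}$. Its boundary rays are not attained. The paper's connectedness argument for (i) never needs extreme directions, so it avoids this problem.
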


\begin{proof}
We prove the first point $i.$ Since $M$ is weakly pseudoconvex, we can assume that after a change of coordinates, $A_1$ is of the form  
$$A_1=\begin{pmatrix}
I_{n_1} & 0 \\
0 & 0\\
\end{pmatrix} $$
where $I_{n_1}$ denotes the identity matrix of size $n_1$. In that case, the zero set of the Levi map is
$$L_0^{-1}(0)=C_{A_1} \cap C_{A_2} \subset C_{A_1}={\rm Ker } A_1.$$
Hence $L_0^{-1}(0)$ is either $\{0\}$ (when $M$ is strongly pseudoconvex) or contained in a subspace of $\C^n$ of complex dimension $n-n_1$. It follows that  $\C^n \setminus L_0^{-1}(0)$ is connected and since 
 $L_0(\C^n\setminus L_0^{-1}(0))=L_0(\C^n) \setminus \{0\}$, then the Levi range minus $\{0\}$ is connected as well. Moreover, due to the form of $A_1$, the Levi range is contained in the half plane $\{ (x,y) \in \R^2 \ | \ x \geq 0\}$.     
 Finally, recall that if $p \in L_0(\C^n)$ then the semi line through $p$ is contained in $L_0(\C^n)$. It follows that the Levi range is a  cone with (vertex) angle $0<\theta<\pi$ in a half plane and thus coincides with the Levi cone.

\vspace{0.5cm}
We now move to  $ii$. We do not use the classification given Boggess (see \cite{bo}). Instead we present a proof based on elementary techniques. In view of $i.$, we consider  $M \subset \C^4$ which is not weakly pseudoconvex case. The matrices $A_1, 
A_2$ are linearly independent and such that for any $c_1,c_2\in \R$, there exist $V,W \in \C^2$ such that 
$$c_1\transp\overline{V}A_1V+ c_2\transp\overline{V}A_2V > 0 \ \ \mbox{ and } \ \ c_1\transp\overline{W}A_1W+ 
c_2\transp\overline{W}A_2W < 0.$$
In other words, for any lines $c_1x+c_2y=0$ in $\R^2$ there exist points in the range lying on each of its side. We aim to show that 
the Levi range is equal to $\R^2$ and thus coincides with the Levi 
cone. 

First, note that each of $A_1,A_2$ has two eigenvalues of opposite sides. Furthermore, we assume that 
$$A_1=\begin{pmatrix}
1 & 0 \\
0 & -1\\
\end{pmatrix} \ \mbox{ and } A_2=\begin{pmatrix}
a & c \\
\overline{c} & b\\
\end{pmatrix}\ 
.$$
 It is important to note that $c\neq0$; otherwise since $A_1$ and $A_2$ are both diagonal with opposite sign terms and linearly independent one may find a positive real linear combination of $A_1$ and $A_2$. 
 
For all $c_1,c_2 \in \R$, not all zero,  the matrix $c_1A_1+c_2A_2$ has eigenvalues of opposite signs. This occurs exactly when 
\begin{equation}\label{eqdet}
\det (c_1A_1+c_2A_2)=-c_1^2+(b-a)c_1c_2+(ab-|c|^2)c_2^2 < 0.
\end{equation}
Note that $ab-|c^2|<0$ since $A_2$ has itself eigenvalues of opposite signs. The determinant in $\eqref{eqdet}$ is negative whenever the discriminant
$$(b-a)^2+4(ab-|c|^2)=(a+b)^2-4|c|^2$$
is itself negative. We then assume from now on that 
\begin{equation}\label{eqa+b}
(a+b)^2<4|c|^2.
\end{equation}
We now show that the line $x=0$ is included in the range. 
The Levi map is equal to $$L_0(V)=\left(|V_1|^2-|V_2|^2,a|V_1|^2+b|V_2|^2+2\Re e (c\overline{V_1}V_2)\right).$$
Choose first $V \ \in \C^2$ in such a way that $V_2=V_1e^{i\phi}$. The second component of the Levi map is then 
$$a|V_1|^2+b|V_2|^2+2\Re e (c\overline{V_1}V_2)=|V_1|^2\left(a+b+2\Re e (ce^{i\phi})\right)=|V_1|^2\left(a+b+2|c|\cos (\tilde{\phi})\right)$$
for some $\tilde{\phi}$. 
In view of Equation \eqref{eqa+b}, the last term $a+b+2|c|\cos (\tilde{\phi})$ may change sign. Recall that if a point $L_0(V)$ is in the Levi range, then the semi line through $0$ and $L_0(V)$  is contained in the range as well. This shows that   $x=0$ is contained in the range.   
 We now show that the range contains any other vertical lines. Let $\alpha>0$ and consider the line $x=\alpha$.  For $V \in \C^2$, we take $V_2=r \in \R$. We now choose $V$ in such a way that $V_1=\sqrt{\alpha+r^2}e^{i\phi}$. This ensures that $|V_1|^2-|V_2|^2=\alpha$.  
 The second component of the Levi map reads as
 $$a(\alpha+r^2)+br^2+2r\sqrt{\alpha+r^2}|c|\cos(\tilde{\phi})$$
 for some  $\tilde{\phi}$. In fact, we choose the angle $\phi$ to make sure that $\cos(\tilde{\phi})=1$. The second component of the Levi map is then 
 $$a\alpha+(a+b)r^2+2r\sqrt{\alpha+r^2}|c|.$$
The function  defined on $\R$ $r \mapsto a\alpha+(a+b)r^2+2r\sqrt{\alpha+r^2}|c|$ is onto. Indeed, it is continuous with limits $-\infty$ as $r \to -\infty$ and $\infty$ as $r\to \infty$ due to  $\eqref{eqa+b}$.

The treatment of the  line $x=\alpha$ with  $\alpha<0$ is very similar.  The second component of the Levi map is still written as 
 $$a\alpha+(a+b)r^2+2r\sqrt{\alpha+r^2}|c|.$$
 The map $r \mapsto a\alpha+(a+b)r^2+2r\sqrt{\alpha+r^2}|c|$ which is now defined on $(-\infty,-\sqrt{-\alpha}] \cup [\sqrt{-\alpha},\infty)$, is onto as well. Indeed, the map is continuous on each of the two intervals and, using $\eqref{eqa+b}$ once more, has limits $-\infty$ as $r\to -\infty$ and  $\infty$ as $r\to \infty$; moreover, the sided limits as $r \to -\sqrt{-\alpha}^-$ and $\sqrt{-\alpha}^+$ coincide.  
 \end{proof}

The proof of $i.$ of Theorem \ref{theorangecone} works only for submanifolds of codimension 2. Indeed, although the Levi range (minus $\{0\}$) of a weakly pseudoconvex model $M$ of codimension $d>2$ remains connected, it may not be convex. This leads to the following question. 

\vspace{0.1cm}
\noindent {\bf Question:} it would then be interesting to find an explicit example of a weakly pseudoconvex quadric $M \subset \C^{n+d}$ with a nonconvex Levi range whose Levi map reaches maximal rank $d$. 
 \vspace{0.5cm}

As for the second point $ii.$ of Theorem \ref{theorangecone}, we believe the following question has a positiver answer. 

\vspace{0.1cm}
\noindent {\bf Question:} Let $M \subset \C^{n+2}$ be a submanifold of codimension 2 of the form \eqref{eqred} with  two linearly independent $n\times n$ matrices $A_1,A_2$. Do its Levi range and its Levi cone coincide?

 \vspace{0.5cm}

\begin{remark}
Note that in the proof of  each of the points in Theorem \ref{theorangecone}, we performed a linear change of coordinates. 
When $M$ is weakly pseudoconvex, the Levi range is a cone whose angle cannot exceed $\pi$, otherwise there exists a line with points on each of its side. When $M\subset \C^4$ is not  weakly pseudoconvex, the surjectivity of Levi map is invariant.  
 \end{remark}

In view of this remark and rephrasing Theorem \ref{theorangecone}, the Levi range of a submanifold of codimension $2$ is always a cone with vertex angle $\theta \in [0,2\pi].$ In case $n=2$, we refine the range of the angle to lie in $[0,\pi]$ or be equal to $2\pi$. In other words, nonconvex cones with angle $\theta \in (\pi,2\pi)$ cannot be the Levi range of a  quadric of codimension $2$ in $\C^4$. Moreover, in light of Example \ref{ex2pi} and Example \ref{exconet}, we can show that any cone of angle 
$\theta \in [0,\pi] \cup \{2\pi\}$ is the Levi range (or cone) of a quadric in $\C^4$ of codimension 2 (without referring to a linear change of coordinates).  For instance, the cone in the first quadrant bounded by the lines $y=ax$ and $y=bx$ with $b<a$ is the Levi range of the quadric  defined by 
  $$A_1=\begin{pmatrix}
\frac{1}{b}& 0 \\
0 & \frac{1}{a}\\
\end{pmatrix} \ \mbox{ and } A_2=\begin{pmatrix}
1 & 0 \\
0 & 1\\
\end{pmatrix}.$$
This leads to 

\vspace{0.1cm}
\noindent {\bf Question:} 
Is this a feature specific to  codimension 2 only? It would  be interesting for instance to focus on codimension $3$ and 
determine all possible Levi cones and whether any such region may be the Levi cone of a quadric of codimension 3.

\vskip 1cm

{\small
\noindent Florian Bertrand\\
Department of Mathematics\\\
American University of Beirut, Beirut, Lebanon\\{\sl E-mail address}: fb31@aub.edu.lb\\

\noindent Francine Meylan \\
Department of Mathematics\\
University of Fribourg, CH 1700 Perolles, Fribourg\\
{\sl E-mail address}: francine.meylan@unifr.ch\\
}

\end{document}